\newtheorem{theorem}{Theorem}%
\theoremstyle{remark}
\theoremstyle{definition}
\def\aa{\alpha}
\def\bb{\beta}
\DeclareMathOperator{\pyr}{pyr}
\newcommand{\car}[2]{\Delta_{#1}\times\Delta_{#2}}
\newcommand{\psum}[3]{\pyr_{#1}(\Delta_{#2}\oplus\Delta_{#3})}
\newcommand{\D}[2]{D(#1,#2)}
\newcommand{\Dab}{\D{\alpha}{\beta}}
\newcommand{\K}[2]{K(#1,#2)}
\newcommand{\Kab}{\K{\alpha}{\beta}}
\newcommand{\F}[2]{\Phi(#1,#2)}
\newcommand{\f}[2]{\phi(#1,#2)}
\title{Polytopes with few vertices and few facets}
\author{Arnau Padrol}
\address{Institut f\"ur Mathematik, Freie Universit\"at Berlin}
\email{arnau.padrol@fu-berlin.de}
\thanks{This research is supported by the DFG
Collaborative Research Center SFB/TR~109 ``Discretization
in Geometry and Dynamics''.}
\begin{document}

\begin{abstract}
In this note we prove that the number of combinatorial types of $d$-polytopes with $d+1+\aa$ vertices and $d+1+\bb$ facets is bounded by a constant independent of $d$.
\end{abstract}

\maketitle

There is only one combinatorial type of $d$-polytope with $d+1$ vertices, the simplex. Every $d$-polytope with $d+2$ vertices is combinatorially equivalent to a repeated pyramid over a free sum of two simplices, $\psum{k}{n}{m}$ with $k\geq0$, $m,n\geq 1$ (cf. \cite[Section~6.1]{Gruenbaum}). The number of $d$-polytopes with $d+3$ vertices is exponential in $d$ \cite{Fusy2006}, and
 the number of $d$-polytopes with $d+4$ vertices is already superexponential in $d$ \cite{Padrol2013,Shemer1982}. Of course, the same numbers apply for polytopes with few facets, by polarity.

In this note we prove that, in contrast, there are few (combinatorial types of) polytopes that have both few vertices and few facets.

\begin{theorem}\label{thm:Kfvff}
For each pair of nonnegative integers $\aa$ and $\bb$ there is a constant $\Kab$, independent from $d$, such that the number of combinatorial types of $d$-polytopes with no more than $d+1+\aa$ vertices and no more than $d+1+\bb$ facets is bounded above by $\Kab$.
\end{theorem}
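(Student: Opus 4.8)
The plan is to pass to Gale duality, where the combinatorial type of a $d$-polytope $P$ with $n$ vertices is encoded by the oriented matroid of its Gale diagram $G=(g_1,\dots,g_n)$, a configuration of $n$ vectors spanning $\RR^{n-d-1}$. A complementary subset indexes a face of $P$ exactly when its complement $C$ satisfies $0\in\relint\conv\{g_i:i\in C\}$, and since this property depends only on the oriented matroid of $G$, that oriented matroid determines the combinatorial type of $P$. The hypothesis $n\le d+1+\aa$ says precisely that $G$ has rank $\delta:=n-d-1\le\aa$, so everything happens in bounded rank; the facets of $P$ correspond to the inclusion-minimal such $C$ (the positive circuits of $G$).

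First I would strip off pyramids. A polytope is a pyramid exactly when its Gale diagram contains a zero vector (the apex), and forming a pyramid raises $d$ by $1$ while preserving both $\aa$ and $\bb$. Writing each $P$ as an iterated pyramid over its non-pyramidal core and observing that, for fixed $d$, the number of pyramid steps is determined by the dimension of that core, it suffices to bound the number of non-pyramidal cores: each core then contributes at most one type in each fixed dimension. Grouping the nonzero $g_i$ by the ray they span yields a reduced configuration $\bar G$ with $r$ distinct directions carrying multiplicities $t_1,\dots,t_r\ge 1$, and the oriented matroid of $G$ (hence the type) is recovered from $\bar G$ together with $(t_i)$. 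Counting positive circuits by choosing one representative per ray, the number of facets equals $\sum_{C\in\cC^{+}(\bar G)}\prod_{i\in C}t_i$ plus one per apex, so that $m\le d+1+\bb$ rearranges, the apex count cancelling, to the key inequality
\[
\sum_{C\in\cC^{+}(\bar G)}\;\prod_{i\in C}t_i\ \le\ \sum_{i=1}^{r}t_i-\delta+\bb .
\]

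The decisive extra ingredient is the polytopality condition for Gale diagrams: every open linear halfspace must contain at least two of the vectors $g_i$. I would use this to show that the left-hand side above is forced to exceed $\sum_i t_i$ by a definite multiple — concretely, that a ray lying in few positive circuits forces its circuit-partners to carry multiplicity at least two (or forces many additional rays), so that $\sum_C\prod_{i\in C}t_i\ge(1+c)\sum_i t_i-h(\aa)$ for some absolute $c>0$. Combined with the displayed inequality this bounds $\sum_i t_i$, and hence both $r$ and every $t_i$, by a function of $\aa$ and $\bb$.

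I expect this super-linear lower bound on the multiplicity-weighted number of positive circuits to be the main obstacle. Without the ``at least two points in every open halfspace'' requirement the estimate is simply false: one can spread arbitrarily many rays in bounded rank while producing only linearly many, slope-one, positive circuits, which would make the displayed inequality vacuous. Thus the whole argument hinges on extracting a strictly-greater-than-one slope from polytopality, and I anticipate that quantifying how strong validity inflates the products $\prod_{i\in C}t_i$ is where the real work lies. Once $\sum_i t_i\le N(\aa,\bb)$ is established, $\bar G$ is an oriented matroid on at most $N$ elements of rank at most $\aa$, of which there are finitely many, each admitting finitely many multiplicity vectors; this bounds the number of cores, and with the pyramid reduction it yields the constant $\Kab$ depending only on $\aa$ and $\bb$.
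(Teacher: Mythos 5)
Your setup is sound: the Gale-dual dictionary (faces via $0\in\relint\conv$ of complements, facets as positive circuits, pyramids as zero vectors), the pyramid-stripping reduction, and the key inequality $\sum_{C\in\cC^{+}(\bar G)}\prod_{i\in C}t_i\le\sum_i t_i-\delta+\bb$ are all correct, and the observation that polytopality is exactly the condition ``every open halfspace contains at least two vectors'' is the right one to focus on. But the proposal has a genuine gap at precisely the point you flag as ``where the real work lies'': the claimed lower bound $\sum_{C}\prod_{i\in C}t_i\ge(1+c)\sum_i t_i-h(\aa)$ is never proved, only motivated by a heuristic (``a ray lying in few positive circuits forces its circuit-partners to carry multiplicity at least two''). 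This lemma is not a routine verification; it is essentially equivalent to the theorem itself. Indeed, combined with your key inequality it immediately bounds the number of vertices of any non-pyramidal polytope of corank $\le\aa$ and facet-excess $\le\bb$, which is exactly the structural statement (Theorem~\ref{thm:Dfvff} in the paper) from which Theorem~\ref{thm:Kfvff} follows. So as written, the argument assumes a statement of the same depth as the one to be proved.

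The paper closes this gap with an external input: Marcus' theorem on unneighborly polytopes, equivalently on minimal positively $2$-spanning configurations --- the same ``two vectors in every open halfspace'' condition you isolate, which is why your plan is naturally positioned to use it. The paper's route is otherwise different from yours: instead of counting circuits directly, it proves by induction on $\bb$ that any non-pyramid in high dimension would have an unneighborly-violating vertex $v$ joined to all others (by Marcus), passes to the vertex figure $P/v$ (which loses at least one facet-excess), concludes by induction that $P/v$ is a pyramid, and derives that $P$ itself is a pyramid --- a contradiction. Your direct counting strategy could plausibly be completed, but only by proving a quantitative circuit-counting lemma for positively $2$-spanning configurations, and any such proof would in effect reprove (or invoke) Marcus' result or the machinery behind Perles' Skeleton Theorem. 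Until that lemma is supplied, the proposal is a program, not a proof.
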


This theorem is a direct consequence of the following structural result.

\begin{theorem}\label{thm:Dfvff}
For each  pair of nonnegative integers $\aa$ and $\bb$ there is a constant $\Dab$ such that every $d$-polytope with no more than $d+1+\alpha$ vertices and no more than $d+1+\beta$ facets is a join of a simplex and an at most $\Dab$-dimensional polytope.
 
Equivalently, every $d$-polytope with $d> \Dab$ either is a pyramid, has more than $d+1+\alpha$ vertices or has more than $d+1+\beta$ facets.

Moreover, $\Dab$ satisfies

\[\f{\aa}{\bb}\leq\Dab\leq \min\left\{\F{\aa}{\bb},\F{\bb}{\aa}\right\},\]
where
\[\F{x}{y}=
\begin{cases}
0 & \text{ if }x=0\text{ or }y=0,\\
3x+y-2 & \text{ if }1\leq x\leq 5,\\
\binom{x}{2}+y+3& \text{ if }x\geq 5;
\end{cases}\]
and

\[\f{x}{y}=
\begin{cases}
0 & \text{ if }x=0\text{ or }y=0,\\
x+y& \text{ if }x=1\text{ or }y=1,\\
x+2y-1&\text{ if }x\geq y>1,\\
2x+y-1&\text{ if }y\geq x>1.
\end{cases}\]

\end{theorem}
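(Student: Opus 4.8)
The plan is to first dispose of the ``equivalently'' reformulation and then prove the two bounds separately, after reducing everything to non-pyramids. A polytope $P$ is a pyramid exactly when some vertex lies on all facets but one, and dually (by polarity) exactly when some facet contains all vertices but one. Writing $P$ as a join $\Delta_k * R$ with $k$ maximal, the base $R$ is a non-pyramid, and since the numbers of vertices and of facets add across a join (the simplex factor contributing $k+1$ to each), an $r$-dimensional $R$ inherits at most $r+1+\aa$ vertices and at most $r+1+\bb$ facets. Hence the structural statement is equivalent to asserting that the dimension of a non-pyramidal polytope with these excesses is bounded, the optimal $\Dab$ being the maximum such dimension. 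Polarity exchanges vertices and facets, so $\Dab=\D{\bb}{\aa}$; thus for the upper bound it suffices to prove $\Dab\leq\F{\aa}{\bb}$, the bound by $\F{\bb}{\aa}$ following by symmetry and producing the minimum. Since $\F{x}{y}$ is monotone in both arguments and the extremal dimension is monotone in the excesses, I may work with a polytope having exactly $d+1+a$ vertices and $d+1+b$ facets for some $a\leq\aa$, $b\leq\bb$.

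\textbf{Passage to Gale diagrams.}
A non-pyramidal $d$-polytope with $d+1+a$ vertices has a Gale diagram of $n=d+1+a$ nonzero vectors in $\RR^{a}$, non-pyramidality being precisely the absence of a zero vector (a zero Gale vector marks a pyramid apex). Under Gale duality the facets correspond bijectively to the \emph{positive circuits} of the configuration, i.e.\ the inclusion-minimal subsets whose convex hull contains the origin in its relative interior, and the polytopality of $P$ is the condition that every open halfspace through the origin contains at least two of the vectors. Consequently the number of positive circuits equals the number $m=d+1+b$ of facets, and bounding $d$ amounts to bounding $n$. The trivial inequality $m\geq d+1$ reads (number of positive circuits) $\geq n-a$, with the excess $b$ measuring how far the configuration is from this minimum; the entire difficulty is to show that this excess is forced to grow once $n$ is large relative to $a$ and $b$.

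\textbf{Upper bound, in two regimes.}
I would bound the largest admissible $n$ for a polytopal configuration of $n$ nonzero vectors in $\RR^{a}$ with circuit-excess $b\leq\bb$, splitting on the size of $a$. For small $a$ (here $a\leq 5$) I would argue directly on the configurations of directions: positive circuits have size between $2$ and $a+1$, so a careful accounting of how many vectors can share the few available directions without producing too many circuits yields $d\leq 3\aa+\bb-2$; the case $a=1$ (vectors on a line, circuits being the $+/-$ pairs, realized by $\dsum{1}{\bb}$) already shows sharpness. For general $a\geq 5$ I would run a global counting argument: every vector lies in a positive circuit, circuits are of bounded size, and the ``at least two per open halfspace'' condition forces a surplus of circuits over $n-a$ that grows quadratically in $a$ once $n$ exceeds $\binom{a}{2}+\bb+O(1)$, giving $d\leq\binom{\aa}{2}+\bb+3$. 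The two estimates coincide at $a=5$, so together they give $\Dab\leq\F{\aa}{\bb}$. The hard part is exactly this lower bound on the number of positive circuits for unrestricted configurations: large circuits are cheap, contributing few circuits per vector, so one must show the covering condition nonetheless forces enough of them while controlling the interaction between circuits of different sizes. This is where the threshold at $a=5$ originates, each regime being governed by a different extremal configuration.

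\textbf{Lower bound via explicit constructions.}
For $\Dab\geq\f{\aa}{\bb}$ I would exhibit non-pyramids of dimension $\f{\aa}{\bb}$ with exactly the prescribed excesses, assembled from simplices by free sums and products (so that realizability is automatic and all Gale vectors are visibly nonzero). When $x=1$ the free sum $\dsum{1}{\bb}$ has dimension $1+\bb$, excesses $(1,\bb)$, hence witnesses $\D{1}{\bb}\geq 1+\bb=\f{1}{\bb}$; when $y=1$ the polar product $\car{\aa}{1}$ serves symmetrically. For $x,y>1$ the quantity $x+2y-1$ points to a combined family generalizing these, built so that the vertex and facet counts land on the formula and so that no vertex lies on all but one facet; the diagonal case $x=y$ is already realized by the symmetric ``doubled-axes'' configuration, which has $d=3a-1=\f{a}{a}$. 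The remaining work here is routine verification of the $f$-vector entries and of non-pyramidality for each branch of $\f{x}{y}$. I expect these constructions not to meet the upper bounds in general, which is consistent with the stated strict two-sided estimate rather than an exact formula for $\Dab$.
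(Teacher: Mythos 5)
Your reductions and your Gale-dual dictionary are correct: pyramid apexes are exactly the zero vectors, facets of $P$ biject with the positive circuits of the Gale diagram, polytopality is the ``two vectors in every open halfspace'' condition, and polarity does reduce the upper bound to $\Dab\leq\F{\aa}{\bb}$. But the actual content of the theorem --- that a polytopal configuration of $n$ nonzero vectors in $\RR^{a}$ with at most $n-a+\bb$ positive circuits forces $n$ to be bounded --- is never proven. At the decisive point you write that ``a careful accounting'' yields $d\leq 3\aa+\bb-2$ for $a\leq 5$ and that ``a global counting argument'' forces a quadratic surplus of circuits for $a\geq 5$; these sentences restate the desired conclusion rather than derive it, and you flag this yourself as ``the hard part.'' This is precisely the content that the paper does \emph{not} prove from scratch: it invokes Marcus' theorem on unneighborly polytopes (equivalently, on minimal positively $2$-spanning configurations --- a Gale-dual statement of exactly the type you are after), a substantial result whose constants (the threshold at $5$, the term $\binom{\aa}{2}+4$) are where the shape of $\F{x}{y}$ comes from; they cannot be read off from a generic counting scheme, and the difficulty is real --- Marcus himself conjectured a linear bound, which turned out to be false. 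Moreover, your plan demands strictly more than Marcus' theorem, since you want to control the circuit (facet) count simultaneously with the number of vectors. The paper sidesteps proving any such combined Gale statement by inducting on $\bb$: for $d>\F{\aa}{\bb}$, Marcus' theorem produces a vertex $v$ joined by an edge to every other vertex; either $P$ is a pyramid, or the vertex figure $P/v$ has vertex-excess $\aa$ and facet-excess at most $\bb-1$, so by induction $P/v$ is a pyramid, and the pyramidal hyperplane of $P/v$ together with the fact that $v$ sees all other vertices forces $P$ itself to be a pyramid. Without either citing Marcus (or an equivalent known bound) or supplying this induction, your upper bound has no proof.

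A secondary, lesser gap: in the lower bound you verify only the cases $\aa=1$ or $\bb=1$ and then assert that a ``combined family'' handles $\aa,\bb>1$, with the verification called routine. The paper exhibits the witnesses explicitly, e.g.\ $\Asterisk_{\bb-1}(\Delta_1\oplus\Delta_1)\ast(\Delta_{\aa-\bb+1}\times\Delta_1)$ for $\aa\geq\bb>1$, a non-pyramid of dimension $2\bb+\aa-1$ with $2\bb+2\aa$ vertices and $3\bb+\aa$ facets, plus its polar for the other case; your ``doubled-axes'' intuition matches the diagonal $\aa=\bb$ instance of this family, so this part of your sketch is completable, but as written it too stops short of a proof.
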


Indeed, this theorem shows that for every $d$ the number of combinatorial types of $d$-polytopes with no more than $d+1+\aa$ vertices and no more than $d+1+\bb$ facets is bounded above by those in dimension $\Dab$. Since the vertex-facet incidences determine the combinatorial type, we get the following crude estimate for $\Kab$:
\[\Kab< 2 ^{(\Dab+\aa+1)(\Dab+\bb+1)}= 2 ^{O(\aa^4+\bb^4)}.
\]

Our proof of Theorem~\ref{thm:Dfvff} is based on a result of Marcus on minimal positively $2$-spanning configurations \cite{Marcus1981,Marcus1984}, which via Gale duality provides lower bounds on the number of vertices of what Wotzlaw and Ziegler call \emph{unneighborly polytopes}~\cite{WotzlawZiegler2011}.
A polytope $P$ is \emph{unneighborly} if for every vertex $v$ of $P$, there is some vertex $w$ such that $(v,w)$ does not form an edge of the graph of $P$.

\begin{theorem}[Marcus 1981{\cite{Marcus1981}}]\label{thm:marcus}
Let $P$ be an unneighborly $d$-polytope with $d + \aa + 1$ vertices
then
\[d\leq\begin{cases}
3\aa-1 & \text{ if }\aa\leq 5,\\
\binom{\aa}{2}+4& \text{ if }\aa\geq 5.
\end{cases}
\]
\end{theorem}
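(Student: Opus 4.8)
The plan is to pass to Gale duality and reduce the statement to Marcus' combinatorial bound on the size of minimal positively $2$-spanning vector configurations. Recall that a finite spanning set of vectors $X=\{x_1,\dots,x_n\}\subseteq\RR^r$ is \emph{positively spanning} if $0\in\intr\conv X$, is \emph{positively $2$-spanning} if $X\setminus\{x_i\}$ is positively spanning for every $i$, and is \emph{minimal} positively $2$-spanning if no proper subset is positively $2$-spanning. Given an unneighborly $d$-polytope $P$ with $n=d+\aa+1$ vertices, I would fix a Gale transform, obtaining a configuration $X$ of $n$ vectors in $\RR^{n-d-1}=\RR^{\aa}$. The goal is to show that $X$ is a minimal positively $2$-spanning configuration; the asserted inequality on $d$ is then exactly Marcus' bound on the cardinality of such configurations, rewritten via $n=d+\aa+1$.

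First I would record the face--coface dictionary: a subset $S$ of vertices of $P$ spans a face if and only if $0\in\relint\conv\{x_j : v_j\notin S\}$. Applying this to singletons, the fact that all $n$ points are genuine vertices means $0\in\intr\conv\{x_j : j\neq i\}$ for every $i$, which says precisely that $X$ is positively $2$-spanning; this is the standard characterization of Gale diagrams of polytopes (equivalently, every open halfspace through the origin contains at least two of the $x_j$). Likewise, $\{v_i,v_j\}$ is an edge of $P$ if and only if $X\setminus\{x_i,x_j\}$ is positively spanning.

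Next I would translate unneighborliness. By definition $P$ is unneighborly exactly when, for every $i$, there is some $j\neq i$ with $\{v_i,v_j\}$ not an edge, that is, with $X\setminus\{x_i,x_j\}$ not positively spanning. Since $X$ is positively $2$-spanning, $X\setminus\{x_i\}$ is positively spanning, and the preceding condition says precisely that $X\setminus\{x_i\}$ fails to be positively $2$-spanning, for every $i$. Because positive $2$-spanning is preserved under adding vectors, ``no single deletion keeps it positively $2$-spanning'' is equivalent to ``no proper subset is positively $2$-spanning''. Hence $X$ is a minimal positively $2$-spanning configuration, and conversely every such configuration is the Gale transform of some unneighborly $d$-polytope with $d+\aa+1$ vertices. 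Applying Marcus' theorem, that a minimal positively $2$-spanning configuration of $n$ vectors in $\RR^r$ satisfies $n\leq 4r$ for $r\leq 5$ and $n\leq\binom{r+1}{2}+5$ for $r\geq 5$ (the two bounds coinciding at $r=5$), and substituting $r=\aa$, $n=d+\aa+1$, yields $d\leq 3\aa-1$ for $\aa\leq 5$ and $d\leq\binom{\aa}{2}+4$ for $\aa\geq 5$.

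The dictionary above is routine; the genuine content lies in Marcus' extremal bound, whose proof requires a case analysis together with the explicit identification of the extremal configurations, and this is exactly what produces the threshold at $\aa=5$, where the extremal family changes. The only delicate points in the translation are keeping track of the $\intr$ versus $\relint$ distinction (harmless, since positive $2$-spanning forces $X$ to span $\RR^{\aa}$ fully, so the relevant relative interiors are full-dimensional) and verifying the upward monotonicity that allows single-element minimality to stand in for subset-minimality.
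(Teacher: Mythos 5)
Your proposal is correct and takes essentially the same route as the paper: the paper treats this statement as Marcus' theorem, justified exactly by the Gale-duality dictionary you spell out (unneighborly $d$-polytopes with $d+\aa+1$ vertices correspond to minimal positively $2$-spanning configurations of $d+\aa+1$ vectors in $\RR^{\aa}$), with Marcus' extremal bound on such configurations invoked as a cited result rather than reproved. Your explicit checks --- that the $\relint$-versus-$\intr$ issue is harmless because positive $2$-spanning forces even two-element deletions to span $\RR^{\aa}$ linearly, and that single-deletion minimality suffices by upward monotonicity --- fill in details the paper leaves implicit, and your constants ($n\leq 4\aa$ for $\aa\leq 5$, $n\leq\binom{\aa+1}{2}+5$ for $\aa\geq 5$) translate correctly to the stated bounds on $d$.
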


As Wotzlaw and Ziegler point out in \cite{WotzlawZiegler2011}, this upper bound is actually tight up to a constant factor. 
A slightly worse, but still quadratic, upper bound can also be deduced from \cite[Theorem~7.2.1]{WotzlawPhD}. This is a quantitative version of \emph{Perles' Skeleton Theorem}, a remarkable result first proved by Perles (unpublished, 1970)%
, reported by Kalai \cite{Kalai1994} and elaborated upon by Wotzlaw \cite[Part~II]{WotzlawPhD}.

\begin{proof}[Proof of Theorem~\ref{thm:Dfvff}]
To prove the upper bound on $\Dab$, we prove that if $P$ is a $d$-polytope with $d + \aa + 1$ vertices and $d+1+\bb$ facets and $d>\F{\aa}{\bb}$ then $P$ is a pyramid. The case $d>\F{\bb}{\aa}$ will then follow directly by polarity.

The proof is by induction on $\bb$. When $\bb=0$ (or $\aa=0$), then $P$ is a simplex and therefore a pyramid. If $\bb=1$, then $P$ is either a pyramid or a Cartesian product of two simplices, $P=\car{n-1}{m-1}$. 
In this case, $P$ is a polytope in dimension $n+m-2$ with $n\cdot m$ vertices and $n+m$ facets. For a fixed value of $\aa>0$, the maximal dimension is attained whenever either $n$ or $m$ is equal to~$2$. This proves that $d\leq \aa+1$.

Assume now that $\bb>1$, $\aa>0$ and  $d>\F{\aa}{\bb}$. By Theorem~\ref{thm:marcus}, $P$ is not unneighborly, and hence it has a vertex $v$ connected with an edge to all the other vertices. Therefore the vertex figure $P/v$ has $(d-1)+\aa+1$ vertices. Let $\bb'$ be such that $P/v$ has $(d-1)+\bb'+1$ facets.

If there is only one facet not containing $v$, then $P$ is a pyramid and we are done. Otherwise, 
$P$ has at least $(d-1)+\bb'+3$ facets because 
 every facet of $P/v$ contributes to a facet of $P$ containing~$v$ and there are at least two facets of $P$ not containing~$v$. 
Hence $\bb'\leq \bb-1$. Then, $d>\F{\aa}{\bb}$ implies that
\[\dim(P/v)=d-1> \begin{cases}
3\aa+\bb-2-1\geq 3\aa+\bb'-2& \text{ if }\aa\leq 5;\\
\binom{\aa}{2}+\bb+3-1\geq \binom{\aa}{2}+\bb'+3& \text{ if }\aa\geq 5.
\end{cases}\]
Therefore, by induction hypothesis, $P/v$ is a pyramid. Hence, all but one vertices of $P/v$ lie in a common hyperplane. This induces a hyperplane containing $v$ and all but one of its neighbors (in the graph of $P$). Call this vertex~$w$. Now, since $v$ is connected with an edge to all the remaining vertices of $P$, this means that all the vertices of $P$ but $w$ lie on a common hyperplane. Therefore, $P$ is a pyramid.

It only remains to prove the lower bounds, which are respectively attained by
\[\begin{cases}
\Delta_{\aa}\times\Delta_1& \text{ if }\aa\geq\bb=1,\\
\Delta_{\bb}\oplus\Delta_1& \text{ if }\bb\geq\aa=1,\\
\Asterisk_{\bb-1} (\Delta_1\oplus\Delta_1) \ast (\Delta_{\aa-\bb+1} \times \Delta_1) &\text{ if }\aa\geq \bb>1,\\
\Asterisk_{\aa-1} (\Delta_1\times\Delta_1) \ast (\Delta_{\bb-\aa+1} \oplus \Delta_1)  &\text{ if }\bb\geq \aa>1.
\end{cases}\]
where $\oplus$, $\ast$ and $\times$ represent free sum, join and Cartesian product, respectively; $\Asterisk_{k}$ denotes the iterated join of $k$ copies; and $\Delta_d$ is a $d$-simplex (see \cite{HenkRGZiegler} for the corresponding definitions). 
Indeed, $\Delta_{\aa}\times\Delta_1$ is $(\aa+1)$-dimensional with $2\aa+2$ vertices and $\aa+3$ facets, and is not a pyramid when $\aa\geq1$. Similarly, $\Asterisk_{\bb-1} (\Delta_1\times\Delta_1) \ast (\Delta_{\aa-\bb+1} \times \Delta_1)$ is a $(2\bb+\aa -1)$-dimensional polytope with $2\bb+2\aa$ vertices and $3\bb+\alpha$ facets that is not a pyramid. The cases $\Delta_{\bb}\oplus\Delta_1$ and $\Asterisk_{\aa-1} (\Delta_1\oplus\Delta_1) \ast (\Delta_{\bb-\aa+1} \oplus \Delta_1)$ follow by polarity.
\end{proof}

The lack of symmetry suggests that this upper bound is not optimal. Indeed, a small computation already shows that $\D{2}{\bb}=\bb+3$ for $2\leq \bb\leq 8$, which coincides with our lower bound for these cases. This raises the question: is $\Dab$ linear on both $\aa$ and $\bb$? This proof method cannot directly provide such a bound because, despite Marcus' original conjecture, the maximal dimension of an unneighborly polytope is quadratic in~$\aa$ (see \cite{WotzlawZiegler2011}).

\section*{Acknowledgements}
The author wants to thank G\"unter Ziegler for his useful suggestions and comments based on a preliminary version of this manuscript.

\bibliographystyle{amsplain}
\bibliography{fewVF}

\end{document}